\begin{document}
\title{\bf When is the Direct Product of Generalized
Mycielskians a Cover Graph?}

\author{Hsin-Hao Lai\\
\normalsize Department of Mathematics \\
\normalsize National Kaohsiung Normal University\\
\normalsize Yanchao, Kaohsiung 824, Taiwan\\
\normalsize {\tt Email: hsinhaolai@nknucc.nknu.edu.tw}\\
\and
Ko-Wei Lih\\
\normalsize Institute of Mathematics\\
\normalsize Academia Sinica\\
\normalsize Nankang, Taipei 115, Taiwan\\
\normalsize {\tt Email: makwlih@sinica.edu.tw}\\
\and
Chen-Ying Lin\thanks{The corresponding author. Supported in part by the
        National Science Council under grant NSC96-2115-M-366-001.}\\
\normalsize Department of Computer Science\\
\normalsize and Information Engineering\\
\normalsize Shu-Te University \\
\normalsize Kaohsiung 824, Taiwan\\
\normalsize {\tt Email: lincy@mail.stu.edu.tw}\\
\and
Li-Da Tong\thanks{Supported in part by the
        National Science Council under grant NSC95-2115-M-110-012-MY2.}\\
\normalsize Department of Applied Mathematics\\
\normalsize National Sun Yat-sen University\\
\normalsize Kaohsiung 804, Taiwan\\
\normalsize {\tt Email: ldtong@math.nsysu.edu.tw}
}

\date{}
\maketitle

\newcommand{\cqfd}{\hfill \rule{8pt}{9pt}}
\newcommand{\la}{\langle}
\newcommand{\ra}{\rangle}
\newtheorem{define}{Definition}
\newtheorem{proposition}[define]{Proposition}
\newtheorem{theorem}[define]{Theorem}
\newtheorem{lemma}[define]{Lemma}
\newtheorem{remark}[define]{Remark}
\newtheorem{corollary}[define]{Corollary}
\newtheorem{problem}[define]{Problem}
\newtheorem{conjecture}[define]{Conjecture}
%
%
\newenvironment{proof}{
\par
\noindent {\bf Proof.}\rm}%
{\mbox{}\hfill\rule{0.5em}{0.809em}\par}
%
\newfont{\Bb}{msbm10 scaled\magstep1}
\baselineskip=16pt
\parindent=0.5cm

\bigskip


\begin{abstract}
\noindent
A graph is said to be a cover graph if it is the underlying
graph of the Hasse diagram of a finite partially ordered set.
The direct product $G \times H$ of graphs $G$ and $H$ is the
graph having vertex set $V(G) \times V(H)$ and edge set $E(G
\times H) = \{(g_i,h_s)(g_j,h_t)\mid g_ig_j \in E(G) \mbox{ and }
h_sh_t \in E(H)\}$. We prove that the direct product ${\sf M}_m(G)
\times {\sf M}_n(H)$ of the generalized Mycielskians of $G$ and $H$
is a cover graph if and only if $G$ or $H$ is bipartite.

\bigskip
\noindent {\bf Keyword:} \  cover graph; direct product;
generalized Mycielskian; circular chromatic number.

\end{abstract}

%
\section{Introduction}
%

All graphs are assumed to be finite and simple in this paper.
Let $uv$ denote the edge joining the vertices $u$ and $v$.
The {\em girth} $g(G)$ of a graph $G$ is the length of a shortest
cycle in $G$ if there is any, and $\infty$ if $G$ possesses no
cycles. The {\em chromatic number} $\chi(G)$ is the least
number of colors to be assigned to the vertices of $G$ so that
adjacent vertices receive distinct colors. The Hasse diagram of
a finite partially ordered set depicts the covering relation of
elements; its underlying graph is called a {\em cover graph}.
The problem of characterizing cover graphs goes back to Ore
\cite{ore}. It is an NP-complete problem to decide whether a
graph $G$ is a cover graph (see \cite{birightwell} and \cite{rodl-2}).

Let the graph $G$ be endowed with an acyclic orientation $D$, i.e.,
there are no directed cycles with respect to $D$.
An arc of $D$ is called {\em dependent} if its reversal creates
a directed cycle in $D$. Let $d_{\min}(G)$ be the minimum number
of dependent arcs over all acyclic orientations of $G$.
Pretzel \cite{pret} proved that $d_{\min}(G)=0$ is equivalent to
$G$ being a cover graph. It follows immediately that a triangle
is not a cover graph. The following is a well-known sufficient
condition to identify a cover graph. It was obtained in Aigner
and Prins \cite{ag} and first appeared in Pretzel and Youngs
\cite{balanced}. A simple proof is included in Fisher et al.
\cite{fisher}.

\begin{theorem}\label{chig}
A graph $G$ is a cover graph if $\chi(G) < g(G)$.
\end{theorem}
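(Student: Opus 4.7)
The plan is to combine Pretzel's characterization (a graph is a cover graph iff it admits an acyclic orientation with no dependent arcs) with the natural acyclic orientation induced by an optimal proper coloring.

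First I would fix a proper coloring $c\colon V(G) \to \{1,2,\ldots,\chi(G)\}$ and orient every edge $uv$ from the endpoint of smaller color to the endpoint of larger color. Since colors strictly increase along any directed path, this orientation $D$ is clearly acyclic. The task is then to show that no arc of $D$ is dependent.

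Suppose for contradiction that some arc $uv$ (with $c(u) < c(v)$) is dependent. By the definition of dependent arc, reversing $uv$ produces a directed cycle in $D$, which means $D$ already contains a directed $u\to v$ path $u = u_0 \to u_1 \to \cdots \to u_k = v$ avoiding the edge $uv$ itself; because $G$ is simple, this path has length $k \geq 2$. Concatenating this path with the arc $vu$ (the reversal) gives a directed closed walk, and in fact its underlying undirected structure is a cycle of length $k+1$ in $G$, so $g(G) \leq k+1$. On the other hand, since colors strictly increase along the directed path, $c(v) - c(u) \geq k$, which forces $\chi(G) \geq c(v) \geq c(u) + k \geq k+1$.

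Combining the two inequalities yields $\chi(G) \geq k+1 \geq g(G)$, contradicting the hypothesis $\chi(G) < g(G)$. Hence $D$ has no dependent arcs, so $d_{\min}(G) = 0$ and $G$ is a cover graph by Pretzel's theorem. The only subtle point is verifying that the alternative directed $u$-to-$v$ path has length at least $2$ (so that the resulting cycle is a genuine cycle of $G$ with length $\geq 3$, and girth can be invoked); this is where simplicity of $G$ is used and is really the only place the argument could slip.
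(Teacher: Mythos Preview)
Your argument is correct and is essentially the standard elementary proof (this is the ``simple proof'' the paper attributes to Fisher et al.). Note, however, that the paper does not itself supply a proof of Theorem~\ref{chig}: it is stated as a known result with references, so there is no in-paper proof to compare against directly.

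The closest the paper comes is its proof of the stronger Theorem~\ref{xc}, where $\chi(G)$ is replaced by $\chi_c(G)$. That argument takes a genuinely different route: instead of constructing an orientation from a proper coloring, it invokes the Goddyn--Tarsi--Zhang identity $\chi_c(G)=\min_D \mbox{Imb}(D)$ to select an acyclic orientation $D_0$ of minimum imbalance, and then observes that a dependent arc in $D_0$ would yield a cycle $C_0$ with $\max\{|C_0|/|(C_0,D_0)^+|,\,|C_0|/|(C_0,D_0)^-|\}\geq |C_0|\geq g(G)$, contradicting $\chi_c(G)<g(G)$. Your approach is more self-contained (it needs only Pretzel's criterion and an optimal coloring), while the paper's version buys the sharper hypothesis $\chi_c(G)<g(G)$ at the cost of importing the imbalance characterization of the circular chromatic number.
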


A {\em homomorphism} from a graph $G$ to a graph $H$ is a
mapping from $V(G)$ to $V(H)$ such that $f(u)f(v) \in E(H)$
whenever $uv \in E(G)$. We denote $G \rightarrow H$ if there
is a homomorphism from $G$ to $H$. Note that $G_1 \rightarrow
G_2$ and $G_2 \rightarrow G_3$ imply $G_1 \rightarrow G_3$
for graphs $G_1, G_2$, and $G_3$. Lih et al. \cite{tower}
proved the following.

\begin{lemma}\label{2homoto1}
Assume that $G \rightarrow H$.
If $H$ is a cover graph, then $G$ is a cover graph.
\end{lemma}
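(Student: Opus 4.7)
The plan is to apply Pretzel's theorem at both ends: the hypothesis that $H$ is a cover graph gives an acyclic orientation $D_H$ of $H$ with no dependent arcs, and I aim to produce an analogous acyclic orientation $D_G$ of $G$ with no dependent arcs by pulling $D_H$ back along the homomorphism $f$. Concretely, for each edge $uv$ of $G$, I copy the orientation that $D_H$ assigns to $f(u)f(v) \in E(H)$: set $u \to v$ in $D_G$ iff $f(u) \to f(v)$ in $D_H$. This is well-defined because a homomorphism maps edges to edges, so $f(u) \neq f(v)$. Once $D_G$ is shown to be acyclic and free of dependent arcs, Pretzel's theorem applied to $G$ finishes the proof.

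Acyclicity of $D_G$ is immediate: any directed cycle $v_1 \to \cdots \to v_k \to v_1$ in $D_G$ maps under $f$ to a closed directed walk of the same length in $D_H$, and any closed directed walk contains a directed cycle, contradicting the acyclicity of $D_H$.

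The subtler step, which I expect to be the main obstacle, is ruling out dependent arcs in $D_G$. Suppose $u \to v$ in $D_G$ is dependent; then in $D_G$ there is a directed path $u \to w_1 \to \cdots \to w_k \to v$ of length at least $2$, and applying $f$ yields a directed walk of length at least $2$ from $f(u)$ to $f(v)$ in $D_H$. To deduce that the arc $f(u) \to f(v)$ of $D_H$ is itself dependent, I need this image to be a directed path of length at least $2$, not merely a walk. The key observation is that in any acyclic digraph every directed walk is automatically a path, since a repeated vertex would yield a closed sub-walk and hence a cycle. Thus the image walk is indeed a directed path of length at least $2$ from $f(u)$ to $f(v)$ in $D_H$, forcing $f(u) \to f(v)$ to be a dependent arc of $D_H$, contrary to the choice of $D_H$. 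This contradiction shows $D_G$ has no dependent arcs, and the lemma follows.
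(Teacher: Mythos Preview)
The paper does not actually prove this lemma; it merely attributes the result to Lih, Lin, and Tong \cite{tower} and moves on. Your argument, by contrast, is a complete and correct self-contained proof. Pulling back an acyclic orientation of $H$ with no dependent arcs along the homomorphism $f$ is the natural construction, and your key observation---that in an acyclic digraph every directed walk already has pairwise distinct vertices, so the image of a length-$\geqslant 2$ directed path in $D_G$ is still a length-$\geqslant 2$ directed path in $D_H$---is precisely what is needed to show that a dependent arc in $D_G$ would force a dependent arc in $D_H$. Since the paper offers no argument to compare against, there is nothing further to contrast; your proof is sound and is essentially the standard one.
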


The {\em direct product} $G \times H$ of graphs $G$ and
$H$ is the graph with vertex set $V(G) \times V(H)$ and edge
set $E(G \times H) = \{(g_i,h_s)(g_j,h_t)\mid g_ig_j \in E(G)
\mbox{ and } h_sh_t \in E(H)\}$. Other names for the direct
product include tensor product, categorical product,
Kronecker product, cardinal product, relational product,
weak direct product, etc.
The following is an outstanding conjecture of Hedetniemi
\cite{het} involving the direct product.

\begin{conjecture}
$\chi(G \times H) = \min \{\chi(G), \chi(H)\}.$
\end{conjecture}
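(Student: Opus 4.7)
The easy direction, $\chi(G \times H) \le \min\{\chi(G),\chi(H)\}$, I would dispose of first by pulling a coloring back along a projection. Without loss of generality suppose $\chi(G) \le \chi(H)$, fix a proper coloring $c : V(G) \to \{1,\ldots,\chi(G)\}$, and set $\widetilde c(g,h) = c(g)$. Any edge $(g_1,h_1)(g_2,h_2)$ of $G \times H$ satisfies $g_1 g_2 \in E(G)$ by the very definition of the direct product, so $\widetilde c(g_1,h_1) \ne \widetilde c(g_2,h_2)$, yielding a proper $\chi(G)$-coloring. Equivalently, this is the observation that the projection $G \times H \to G$ is a graph homomorphism, so the upper bound is an instance of the general fact that homomorphisms do not raise chromatic number.

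The reverse inequality $\chi(G \times H) \ge \min\{\chi(G),\chi(H)\}$ is where the entire difficulty sits, and it is precisely the content of Hedetniemi's conjecture, a longstanding open problem. The natural attack is to suppose $G \times H$ admits a proper $k$-coloring $c$ with $k < \min\{\chi(G),\chi(H)\}$ and attempt to manufacture from the slices $\{c(\cdot,h) : h \in V(H)\}$ and $\{c(g,\cdot) : g \in V(G)\}$ a proper $k$-coloring of one factor, contradicting $\chi(G),\chi(H) > k$. The obstruction is that the individual slices need not be proper colorings of $G$ or $H$, and no purely combinatorial recipe is known for gluing the information they carry into a single honest coloring of a factor.

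The main obstacle is therefore entirely on the lower-bound side. Partial progress is known: the case $k \le 3$ was settled by El-Zahar and Sauer, and several special families (graphs with unique $k$-colorings, exponential graph constructions, and so on) have been verified through the work of Duffus, Sands, Woodrow and others, but in each case the argument exploits structure not available in general. I would not attempt a full proof, since the conjecture is stated here only as context for the chromatic behavior of direct products; the arguments actually needed for $\mathsf{M}_m(G) \times \mathsf{M}_n(H)$ in the sequel should be accessible through the trivial upper bound above combined with the homomorphism-theoretic tool Lemma~\ref{2homoto1} and the chromatic-versus-girth bound Theorem~\ref{chig}, applied once one of the factors is assumed bipartite.
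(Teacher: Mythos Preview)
The statement is labeled a \emph{Conjecture} in the paper, and the paper offers no proof---it is simply quoted as Hedetniemi's outstanding open problem to motivate the subject. So there is nothing in the paper to compare your attempt against. Your handling is entirely appropriate: you dispose of the trivial direction $\chi(G\times H)\le\min\{\chi(G),\chi(H)\}$ via the projection homomorphism, and you correctly identify the reverse inequality as the genuinely open content, declining to attempt it. That is exactly the right reading of the paper's intent.

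One remark worth adding for currency: Hedetniemi's conjecture has since been \emph{disproved} by Shitov (Ann.\ of Math.\ 190 (2019), 663--667), so the equality fails in general. The paper predates this, and in any case only the easy upper bound is ever used downstream, so nothing in the paper's actual results is affected.
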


The reader is referred to Imrich
and Klav\v zar \cite{ik} for more information on direct products.
It is easy to see that $G \times H \rightarrow G$ and
$G \times H \rightarrow H$. The following theorem is a
consequence of Lemma \ref{2homoto1}.

\begin{theorem}\label{dir}
The direct product $G \times H$ is a cover graph
if $G$ or $H$ is a cover graph.
\end{theorem}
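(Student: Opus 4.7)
The proof will be essentially a one-liner once the right ingredients are lined up. The plan is to invoke the projection homomorphisms from $G \times H$ onto its factors, combined with Lemma \ref{2homoto1}.

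First I would observe that the coordinate projection $\pi_G \colon V(G \times H) \to V(G)$ defined by $\pi_G(g,h) = g$ is a graph homomorphism: by the very definition of the edge set $E(G \times H)$, any edge $(g_i,h_s)(g_j,h_t)$ of $G \times H$ requires $g_ig_j \in E(G)$, so $\pi_G$ maps edges to edges. This gives $G \times H \rightarrow G$, and symmetrically $G \times H \rightarrow H$.

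Now suppose, without loss of generality, that $G$ is a cover graph. Then applying Lemma \ref{2homoto1} to the homomorphism $G \times H \rightarrow G$ immediately yields that $G \times H$ is a cover graph. The argument for the case when $H$ is a cover graph is identical, using the projection onto the second coordinate.

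There is essentially no obstacle here; the only thing to verify carefully is that the coordinate projection really is a homomorphism, which is immediate from the definition of $E(G \times H)$. The substantive work has already been done in Lemma \ref{2homoto1}, and Theorem \ref{dir} is packaged as a direct corollary.
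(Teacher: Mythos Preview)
Your proof is correct and follows exactly the approach the paper indicates: the paper observes that $G \times H \rightarrow G$ and $G \times H \rightarrow H$ via the coordinate projections and states Theorem~\ref{dir} as an immediate consequence of Lemma~\ref{2homoto1}. You have simply spelled out that one-line argument in detail.
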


Since $G$ is isomorphic to the subgraph of $G \times G$
induced by the set $\{(v, v)\mid v\in V(G)\}$, we have
the following consequence.

\begin{corollary}\label{G*G}
The direct product $G \times G$ is not a cover graph
if $G$ is not a cover graph.
\end{corollary}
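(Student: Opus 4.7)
The plan is to exploit the hint built into the statement: the diagonal map $v \mapsto (v,v)$ exhibits $G$ as an induced subgraph of $G \times G$, and then to invoke Lemma \ref{2homoto1} in its contrapositive form.

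First I would verify that the diagonal $\Delta = \{(v,v) \mid v \in V(G)\}$ induces a copy of $G$ in $G \times G$. For distinct vertices $u,v \in V(G)$, the pair $(u,u)(v,v)$ is an edge of $G \times G$ precisely when $uv \in E(G)$ and $uv \in E(G)$, i.e., precisely when $uv \in E(G)$. Hence the map $v \mapsto (v,v)$ is an isomorphism from $G$ onto the subgraph of $G \times G$ induced by $\Delta$. In particular, the inclusion of $\Delta$ into $V(G \times G)$ provides a homomorphism $G \rightarrow G \times G$.

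Now suppose, towards a contradiction, that $G \times G$ is a cover graph. Because $G \rightarrow G \times G$, Lemma \ref{2homoto1} forces $G$ to be a cover graph as well, contradicting the hypothesis. Therefore $G \times G$ cannot be a cover graph whenever $G$ is not.

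There is essentially no obstacle here: the entire argument is a one-line application of Lemma \ref{2homoto1} once the diagonal embedding is in hand. The only point worth checking carefully is that the diagonal really does yield a homomorphism from $G$ into $G \times G$ (equivalently, that $G$ sits inside $G \times G$ as a subgraph), which is a direct consequence of the definition of the direct product applied to pairs with equal coordinates.
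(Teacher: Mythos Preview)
Your argument is correct and matches the paper's own justification: the paper simply observes that $G$ is isomorphic to the subgraph of $G \times G$ induced by $\{(v,v)\mid v\in V(G)\}$ and declares the corollary a consequence, which is exactly the diagonal embedding plus (the contrapositive of) Lemma~\ref{2homoto1} that you spell out.
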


\begin{theorem}\label{thm5}
We have the following two cases.
\begin{enumerate}
\item  Suppose that $g(G)=g(H)=3$.
Then $G \times H$ is not a cover graph.

\item  Suppose that $g(G)>3 \geqslant \chi(H)$.
Then $G \times H$ is a cover graph.
\end{enumerate}
\end{theorem}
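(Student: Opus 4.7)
Part (1). My plan is to produce a homomorphism $K_3 \to G \times H$ and invoke Lemma \ref{2homoto1}, using the fact that a triangle is not a cover graph (as noted after Pretzel's theorem in the introduction). Since $g(G) = g(H) = 3$, the inclusion of any triangle of $G$ gives a homomorphism $f_1 : K_3 \to G$, and similarly $f_2 : K_3 \to H$. Then $v \mapsto (f_1(v), f_2(v))$ is a homomorphism $K_3 \to G \times H$: for each edge $uv$ of $K_3$ we have $f_1(u)f_1(v) \in E(G)$ and $f_2(u)f_2(v) \in E(H)$, so $(f_1(u), f_2(u))(f_1(v), f_2(v)) \in E(G \times H)$. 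Were $G \times H$ a cover graph, Lemma \ref{2homoto1} would force $K_3$ to be a cover graph, a contradiction.

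Part (2). My plan is to reduce first to the case in which the second factor is $K_3$ and then apply Theorem \ref{chig}. Since $\chi(H) \leq 3$, any proper $3$-coloring of $H$ furnishes a homomorphism $\varphi : H \to K_3$, and hence $(g,h) \mapsto (g, \varphi(h))$ is a homomorphism $G \times H \to G \times K_3$. By Lemma \ref{2homoto1}, it therefore suffices to show that $G \times K_3$ is a cover graph.

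For this I would verify the hypothesis of Theorem \ref{chig}. Projection onto the second coordinate is a homomorphism $G \times K_3 \to K_3$, which pulls back a proper $3$-coloring, so $\chi(G \times K_3) \leq 3$. For the girth, suppose $(a, x)(b, y)(c, z)$ is a triangle in $G \times K_3$; then $ab, bc, ca \in E(G)$ with $a, b, c$ pairwise distinct, forming a triangle in $G$, contradicting $g(G) > 3$. Hence $g(G \times K_3) \geq 4 > 3 \geq \chi(G \times K_3)$, and Theorem \ref{chig} concludes that $G \times K_3$ is a cover graph.

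I do not anticipate a serious obstacle in either part: each reduces to routine homomorphism chasing once the right auxiliary graph is identified ($K_3$ as the source in (1), $K_3$ as the second factor in (2)). The one observation underlying both steps is the remark that a triangle in a direct product projects to a triangle in each factor, which supplies the contradiction in (1) and the girth bound in (2).
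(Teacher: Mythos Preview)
Your proof is correct and follows essentially the same approach as the paper. In Part~2, the paper applies Theorem~\ref{chig} directly to $G \times H$ (noting $g(G \times H) > 3$ and $\chi(G \times H) \leqslant \chi(H) \leqslant 3$ via the projection $G \times H \to H$), whereas you route through $G \times K_3$ first; the detour is harmless but unnecessary.
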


\begin{proof}
If $g(G)=g(H)=3$, then $G\times H$ contains a triangle;
that is, $G\times H$ is not a cover graph.
If $g(G)>3 \geqslant \chi(H)$, then we have that $g(G
\times H)>3 \geqslant \chi(H)$.
Since $G \times H \rightarrow H$, $\chi(H) \geqslant
\chi(G \times H)$.
By Theorem \ref{chig}, $G \times H$ is a cover graph.
\end{proof}

\bigskip

Probabilistic arguments (\cite{bbn} for example) show that it is
common to have triangle-free graphs that are not cover graphs.
Take the direct product of such a graph with a triangle. Theorem
\ref{thm5} leaves open possibility that this direct product could
be a cover graph even if each of the two factors is not a
cover graph. In the next section we shall show that some direct
product is not a cover graph when each of the two components has
girth 4, chromatic number 4, and is not a cover graph. We shall
also answer the question posed in the title.
A general problem remains to be completely solved is to determine
whether the product $G \times H$ is not a cover graph while both
$G$ and $H$ are not cover graphs.

\bigskip

%
\section{Direct product of generalized Mycielskians}
%

Let $G=(V_0, E_0)$ be a graph with vertex set $V_0 = \{\la 0,0
\ra, \la 0,1 \ra, \ldots , \la 0,n-1 \ra\}$ and edge set $E_0$.
For $m>0$, the {\em generalized Mycielskian} ${\sf M}_m(G)$ of
$G$ has vertex set $V=V_0 \cup (\cup_{i=1}^{m} V_i) \cup \{u\}$,
where $V_i=\{\la i,j \ra \mid 0 \leqslant j \leqslant n-1\}$ for
$1 \leqslant i \leqslant m$, and edge set $E=E_0 \cup
(\cup_{i=1}^{m} E_i) \cup \{\la m,j \ra u \mid 0 \leqslant j
\leqslant n-1\}$, where $E_i= \{\la i-1,j \ra \la i,k \ra \mid
\la 0,j \ra \la 0,k \ra \in E_0\}$ for $1 \leqslant i \leqslant
m$. We call $u$ the root in ${\sf M}_m(G)$. In the sequel, the
second coordinates of vertices in ${\sf M}_m(G)$ are always taken
modulo the order $n$. We note that ${\sf M}_1(G)$ is commonly
known as the {\em Mycielskian} of $G$. It is easy to see that if
$H$ is a subgraph of $G$, then ${\sf M}_m(H)$ is a subgraph of
${\sf M}_m(G)$. The following theorem appeared in Lih et al.
\cite{tower}.

\begin{theorem}\label{iffeven}
Let $n \geqslant 3$. The graph ${\sf M}_m(C_n)$
is a cover graph if and only if $n$ is even.
\end{theorem}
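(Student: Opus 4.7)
For the $(\Leftarrow)$ direction, suppose $n$ is even. Then $C_n$ is bipartite and admits a homomorphism $f : C_n \to K_2$. The construction $\mathsf{M}_m(\cdot)$ is functorial in graph homomorphisms: any $f : G \to H$ induces a homomorphism $\mathsf{M}_m(G) \to \mathsf{M}_m(H)$ via $\la i, v \ra \mapsto \la i, f(v) \ra$ and $u \mapsto u$, as is checked directly against the three edge types $E_0$, $E_i$, and the root edges. Hence $\mathsf{M}_m(C_n) \to \mathsf{M}_m(K_2)$. A direct inspection of the definition shows that every vertex of $\mathsf{M}_m(K_2)$ has degree $2$ and that the graph is connected on $2(m+1)+1=2m+3$ vertices, so $\mathsf{M}_m(K_2) \cong C_{2m+3}$. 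Since $m \geqslant 1$ forces $\chi(C_{2m+3}) = 3 < 2m+3 = g(C_{2m+3})$, Theorem~\ref{chig} gives that $C_{2m+3}$ is a cover graph, and Lemma~\ref{2homoto1} transports this conclusion to $\mathsf{M}_m(C_n)$.

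For the $(\Rightarrow)$ direction, suppose $n$ is odd. If $n = 3$, then the subgraph of $\mathsf{M}_m(C_3)$ induced by $V_0$ is a triangle; since the class of cover graphs is closed under taking subgraphs while a triangle is not a cover graph, $\mathsf{M}_m(C_3)$ is not a cover graph. For $n \geqslant 5$ odd the graph $\mathsf{M}_m(C_n)$ is triangle-free, so no trivial subgraph obstruction is available, and I would instead invoke Pretzel's criterion that $G$ is a cover graph iff $G$ admits an acyclic orientation with no dependent arc. Assume for contradiction such an orientation $D$ of $\mathsf{M}_m(C_n)$ exists; the plan is to track $D$ around the odd cycle on $V_0$ and through the Mycielski layers $E_1,\ldots,E_m$ up to the root $u$, and to extract a parity contradiction. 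The $n$ edges of $V_0$, the column-shifting edges of the $E_i$ (each changing the second coordinate by $\pm 1$), and the two root edges produce a system of interlocking odd cycles of lengths $n$ in $V_0$ and $2m+3$ through $u$, and the claim is that no orientation of this system can simultaneously be acyclic and dependent-arc-free.

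The main obstacle is precisely this $n \geqslant 5$ odd case. Because $\chi(\mathsf{M}_m(C_n)) \geqslant 4$ while $g(\mathsf{M}_m(C_n)) = 4$, Theorem~\ref{chig} is not available, and triangle-freeness rules out any naive subgraph obstruction, so one is forced into a global argument. The cleanest route is likely via the circular chromatic number, as hinted by the paper's keyword list: one computes $\chi_c(\mathsf{M}_m(C_n))$ for odd $n$ and appeals to a known relationship between $\chi_c$, odd girth, and cover graphs in the spirit of Pretzel to conclude that no acyclic orientation can be dependent-arc-free. I expect this to be where most of the technical work of the original proof in \cite{tower} is concentrated; having that ingredient in hand, the reverse direction follows by combining the $n=3$ triangle argument with the $\chi_c$-based obstruction for $n \geqslant 5$.
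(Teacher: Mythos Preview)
The paper does not actually prove this theorem; it is quoted from \cite{tower} and then used as a black box (most notably in the proof of Theorem~\ref{lai}). So there is no in-paper argument to compare against for the hard direction.

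Your $(\Leftarrow)$ argument is correct. The functoriality of ${\sf M}_m(\cdot)$ and the identification ${\sf M}_m(K_2)\cong C_{2m+3}$ both check out, and then Theorem~\ref{chig} plus Lemma~\ref{2homoto1} finish. For comparison, the paper's own route to this direction (inside the proof of the more general Theorem~\ref{lai}) is slightly different and does not pass through $C_{2m+3}$: it simply notes $\chi({\sf M}_m(G))=3<4\leqslant g({\sf M}_m(G))$ for bipartite $G$ with an edge and applies Theorem~\ref{chig} directly.

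Your $(\Rightarrow)$ direction has a genuine gap, as you yourself flag. The $n=3$ case is fine, but for odd $n\geqslant 5$ you give only a plan. Two remarks on that plan. First, the ``interlocking odd cycles / parity'' sketch is not yet an argument; nothing concrete is extracted from the acyclic, dependent-arc-free hypothesis. Second, the circular-chromatic-number route, at least as the paper sets things up, runs in the wrong direction and is circular: Theorem~\ref{xc} only lets you conclude \emph{cover graph} from $\chi_c<g$, not \emph{non-cover} from $\chi_c\geqslant g$, and Section~3 of the paper \emph{derives} $\chi_c({\sf M}_m(C_{2s+1}))=4$ from Theorems~\ref{lai} and~\ref{xc}, hence ultimately from Theorem~\ref{iffeven} itself. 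So within this paper's framework you cannot bootstrap that value to prove Theorem~\ref{iffeven}. The substantive work for odd $n\geqslant 5$ really does live in \cite{tower}, and without reproducing it (or an independent substitute) the reverse implication remains unproved in your proposal.
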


This can be further generalized into the following.

\begin{theorem}\label{lai}
The graph ${\sf M}_m(G)$ is a cover graph if and only
if $G$ is bipartite.
\end{theorem}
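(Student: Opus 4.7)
The plan is to reduce both directions of the biconditional to the cycle case that is Theorem~\ref{iffeven}, by combining the homomorphism monotonicity of the cover-graph property (Lemma~\ref{2homoto1}) with the functoriality of the Mycielskian construction.

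For the only-if direction I would argue by contrapositive. If $G$ is not bipartite then $G$ contains some odd cycle $C_{2k+1}$ as a subgraph, so by the subgraph monotonicity of ${\sf M}_m$ noted immediately before Theorem~\ref{iffeven}, ${\sf M}_m(C_{2k+1})$ sits inside ${\sf M}_m(G)$ and the inclusion is a homomorphism ${\sf M}_m(C_{2k+1}) \to {\sf M}_m(G)$. Theorem~\ref{iffeven} says that ${\sf M}_m(C_{2k+1})$ is not a cover graph, and the contrapositive of Lemma~\ref{2homoto1} then forces ${\sf M}_m(G)$ not to be a cover graph either.

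For the if direction, suppose $G$ is bipartite, so that there is a homomorphism $f\colon G \to K_2$, and, composing with the inclusion $K_2 \subseteq C_4$, a homomorphism $f\colon G \to C_4$. The key step is to lift $f$ to a homomorphism $f^{*}\colon {\sf M}_m(G) \to {\sf M}_m(C_4)$ by setting $f^{*}(\langle i,j\rangle) = \langle i, f(j)\rangle$ and $f^{*}(u) = u$, and checking that each of the three edge types of ${\sf M}_m(G)$, namely those in $E_0$, those between consecutive layers $V_{i-1}$ and $V_i$, and those from $V_m$ to $u$, is carried to an edge of ${\sf M}_m(C_4)$. Since $4$ is even, Theorem~\ref{iffeven} identifies ${\sf M}_m(C_4)$ as a cover graph, and one more application of Lemma~\ref{2homoto1} yields that ${\sf M}_m(G)$ is a cover graph.

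The only step not already packaged as a previous result is this functoriality lift from a homomorphism of base graphs to a homomorphism of Mycielskians, but it is a direct edge-by-edge verification and poses no real conceptual obstacle; the theorem ultimately reduces to the cycle case of Theorem~\ref{iffeven}.
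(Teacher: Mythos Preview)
Your argument is correct. The only-if direction matches the paper's proof exactly: an odd cycle in $G$ yields ${\sf M}_m(C_{2k+1})$ as a subgraph of ${\sf M}_m(G)$, and Theorem~\ref{iffeven} plus the monotonicity of the cover-graph property under subgraphs (equivalently, Lemma~\ref{2homoto1} applied to the inclusion) finishes it.

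For the if direction the paper takes a different, slightly more direct route: rather than lifting a homomorphism $G\to C_4$ to ${\sf M}_m(G)\to{\sf M}_m(C_4)$ and invoking Theorem~\ref{iffeven} again, it simply observes that when $G$ is bipartite with at least one edge one has $\chi({\sf M}_m(G))=3<4\leqslant g({\sf M}_m(G))$ and applies Theorem~\ref{chig}. Your approach has the pleasant feature of reducing both directions uniformly to the cycle case and of recording the functoriality of ${\sf M}_m$ under homomorphisms (a fact used later in the paper anyway, in the proof of Lemma~\ref{big-homo-to-small}); the paper's approach avoids that verification by appealing to the standard chromatic and girth behavior of generalized Mycielskians of bipartite graphs. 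Both are short and valid.
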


\begin{proof}
If $G$ has no edge, then obviously ${\sf M}_m(G)$ is a cover
graph. Let $G$ be a bipartite graph with at least one edge. Then
$\chi({\sf M}_m(G)) = 3 < 4 \leqslant g({\sf M}_m(G))$. Hence
${\sf M}_m(G)$ is a cover graph. If $G$ is not bipartite, then
$G$ contains an odd cycle $C$ with length at least 3. By Theorem
\ref{iffeven}, ${\sf M}_m(C)$ is not a cover graph. Hence, ${\sf
M}_m(G)$, being a supergraph of ${\sf M}_m(C)$, is not a cover
graph.
\end{proof}

\bigskip

The following two lemmas can be verified in a straightforward
manner.

\begin{lemma}\label{product-of-subgraph}
Let $H_i$ be a subgraph of $G_i$ for $i=1$ and $2$.
Then $H_1 \times H_2$ is a subgraph of $G_1 \times G_2$.
\end{lemma}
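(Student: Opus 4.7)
The plan is to verify the two defining conditions of the subgraph relation: containment of vertex sets and containment of edge sets. Since the direct product is defined purely in terms of the vertex sets and edge sets of its factors, this should reduce to a direct application of the definition together with the assumption $V(H_i) \subseteq V(G_i)$ and $E(H_i) \subseteq E(G_i)$ for $i = 1, 2$.

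First I would observe that
\[
V(H_1 \times H_2) = V(H_1) \times V(H_2) \subseteq V(G_1) \times V(G_2) = V(G_1 \times G_2),
\]
which settles the vertex inclusion. Next I would take an arbitrary edge $(h_i, k_s)(h_j, k_t) \in E(H_1 \times H_2)$. By the definition of the direct product, this means $h_i h_j \in E(H_1)$ and $k_s k_t \in E(H_2)$. The subgraph hypothesis then gives $h_i h_j \in E(G_1)$ and $k_s k_t \in E(G_2)$, so by the definition again $(h_i, k_s)(h_j, k_t) \in E(G_1 \times G_2)$. This yields $E(H_1 \times H_2) \subseteq E(G_1 \times G_2)$, completing the verification.

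There is no real obstacle here; the statement is essentially a bookkeeping exercise that unfolds the definitions of \emph{subgraph} and \emph{direct product} once each. The only thing to be mildly careful about is that both factor relations must be checked in the same pair of coordinates, but this is automatic from how edges of the direct product are specified.
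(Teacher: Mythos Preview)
Your argument is correct and is exactly the straightforward verification the paper has in mind; the paper itself does not spell out a proof, remarking only that the lemma ``can be verified in a straightforward manner.'' There is nothing to add or correct.
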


\begin{lemma}\label{homo-to-subgraph}
Let $H_i \rightarrow G_i$ for $i=1$ and $2$.
Then $H_1 \times H_2 \rightarrow G_1 \times G_2$.
\end{lemma}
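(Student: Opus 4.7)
The plan is to construct the obvious product homomorphism coordinate-wise and then chase definitions. Let $f_i\colon V(H_i)\to V(G_i)$ witness $H_i \rightarrow G_i$ for $i=1,2$, and define
$$f\colon V(H_1\times H_2)\to V(G_1\times G_2),\qquad f(h_1,h_2)=(f_1(h_1),f_2(h_2)).$$
I would then check that $f$ preserves edges.

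To verify this, I would take an arbitrary edge $(h_1,h_2)(h_1',h_2')$ of $H_1\times H_2$. By the definition of the direct product, this forces $h_1h_1'\in E(H_1)$ and $h_2h_2'\in E(H_2)$. Applying the homomorphism property of $f_1$ and $f_2$ respectively yields $f_1(h_1)f_1(h_1')\in E(G_1)$ and $f_2(h_2)f_2(h_2')\in E(G_2)$, so unwinding the direct-product definition again gives $f(h_1,h_2)\,f(h_1',h_2')\in E(G_1\times G_2)$, as required.

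The only minor subtlety worth noting is that, since all graphs in the paper are simple, ``edge'' requires distinct endpoints; but the existence of the edge $f_i(h_i)f_i(h_i')$ in the simple graph $G_i$ already forces $f_i(h_i)\neq f_i(h_i')$, so $f(h_1,h_2)$ and $f(h_1',h_2')$ automatically differ in at least one coordinate. There is no genuine obstacle here, so this step is really pure definition-chasing; the lemma is stated only because it will be invoked repeatedly in combination with Lemma \ref{2homoto1} and Lemma \ref{product-of-subgraph} in the sequel.
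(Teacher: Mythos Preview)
Your argument is correct and is exactly the straightforward verification the paper has in mind; the paper does not spell out a proof but simply remarks that this lemma (together with Lemma~\ref{product-of-subgraph}) ``can be verified in a straightforward manner.'' Your coordinatewise product map $f=(f_1,f_2)$ and the accompanying edge check constitute precisely that verification.
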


The next lemma will be used in proving our main results.

\begin{lemma}\label{big-homo-to-small}
Let $p \geqslant m \geqslant 1$ and $q \geqslant s \geqslant 1$.
Then we have ${\sf M}_p(C_{2q+1}) \rightarrow {\sf M}_m(C_{2s+1})$.
\end{lemma}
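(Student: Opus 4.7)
My plan is to factor the desired homomorphism as a composition
$$
{\sf M}_p(C_{2q+1}) \rightarrow {\sf M}_m(C_{2q+1}) \rightarrow {\sf M}_m(C_{2s+1}),
$$
treating the layer reduction and the cycle-length reduction as two independent steps, each constructed explicitly. When $p = m$ or $q = s$ the corresponding factor is simply the identity, so there is no loss of generality in handling the two reductions separately.

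For the layer-reduction factor, I would establish ${\sf M}_p(G) \rightarrow {\sf M}_{p-1}(G)$ for an arbitrary graph $G$ and iterate $p - m$ times. The key move is to identify the top layer $V_p$ with the new root $u'$ of ${\sf M}_{p-1}(G)$, while sending the old root $u$ to an arbitrary vertex of the new top layer $V_{p-1}$. Concretely, set $\psi(\la i, j \ra) = \la i, j \ra$ for $0 \leqslant i \leqslant p-1$, $\psi(\la p, j \ra) = u'$, and $\psi(u) = \la p-1, 0 \ra$. Checking each edge class: edges in $E_0, E_1, \ldots, E_{p-1}$ are preserved by the identity on the lower layers; an edge $\la p-1, j \ra \la p, k \ra$ in $E_p$ maps to $\la p-1, j \ra u'$, which is an edge of ${\sf M}_{p-1}(G)$ because $u'$ is adjacent to every vertex of $V_{p-1}$ by definition; and an edge $\la p, j \ra u$ maps to $u' \la p-1, 0 \ra$, again an edge. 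Since $V_p$ is independent in ${\sf M}_p(G)$, collapsing it to a single vertex creates no forbidden identifications.

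For the cycle-reduction factor, I would invoke the classical fact that $C_{2q+1} \rightarrow C_{2s+1}$ whenever $q \geqslant s \geqslant 1$, equivalent to the inequality $\chi_c(C_{2q+1}) = 2 + 1/q \leqslant 2 + 1/s = \chi_c(C_{2s+1})$ of circular chromatic numbers. Given any such homomorphism $\phi$, the natural lift $\widetilde\phi : {\sf M}_m(C_{2q+1}) \rightarrow {\sf M}_m(C_{2s+1})$ defined by $\widetilde\phi(\la i, j \ra) = \la i, \phi(j) \ra$ and $\widetilde\phi(u) = u$ is easily checked to be a homomorphism: each of the three families of edges (those inside $V_0$, those of the form $\la i-1, j \ra \la i, k \ra$ between consecutive layers, and those incident to the root) is preserved precisely because $\phi$ preserves edges of $C_{2q+1}$.

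The main technical point is the verification of the layer-collapse step, which is a multi-case but routine check; its correctness hinges on the fact that the root of the generalized Mycielskian is adjacent to every vertex of the top layer, so identifying the top layer with the root of the next smaller Mycielskian is consistent with adjacency. Once both factors are in place, composition together with transitivity of graph homomorphisms immediately yields ${\sf M}_p(C_{2q+1}) \rightarrow {\sf M}_m(C_{2s+1})$, as required.
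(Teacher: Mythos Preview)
Your proof is correct and follows the same two-stage factorization as the paper, through the intermediate graph ${\sf M}_m(C_{2q+1})$. The differences are in the concrete realizations of the two factors, not in the overall strategy.

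For Stage~1, the paper collapses the \emph{bottom} $p-m+1$ layers onto $V_0$ via $\sigma(\la i,j\ra) = \la 0,j\ra$ for $i \le p-m$ and $\sigma(\la i,j\ra) = \la i-p+m,j\ra$ otherwise (root to root), whereas you collapse the \emph{top} layer onto the new root and send the old root into $V_{p-1}$, iterating $p-m$ times. Both work; the paper's map is a single step, yours is an iterated one-step retraction.

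For Stage~2, the paper writes out an explicit folding homomorphism $C_{2q+1} \to C_{2s+1}$ and then verifies its lift by hand; you invoke the standard fact $C_{2q+1} \to C_{2s+1}$ and observe that ${\sf M}_m(\cdot)$ is functorial on graph homomorphisms. Your formulation is a bit cleaner conceptually, while the paper's is fully self-contained. Either way the content is the same.
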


\begin{proof}
Let the cycle $C_n$ with $n$ vertices be denoted
$\la 0,0 \ra \la 0,1 \ra \cdots  \la 0,n-1 \ra \la 0,0 \ra$.
The required homomorphism will be constructed in two stages.

\medskip

{\bf Stage 1}.\
Define a mapping $\sigma: V({\sf M}_p(C_{2q+1})) \rightarrow
V({\sf M}_m(C_{2q+1}))$ as follows.
Let $\sigma(u)=u$. We always assume that $0 \leqslant j \leqslant 2q$.
Now let
$$
\sigma(\la i, j \ra )=  \left \{
\begin{array} {ll}
\la 0, j \ra     & \mbox{ if } 0     \leqslant i \leqslant p-m, \\
\la i-p+m, j \ra & \mbox{ if } p-m+1 \leqslant i \leqslant p. \\
\end{array}
\right.
$$

To check that $\sigma$ is a homomorphism from ${\sf M}_p(C_{2q+1})$
to ${\sf M}_m(C_{2q+1})$, let $xy$ be an edge of ${\sf M}_p(C_{2q+1})$.

If $x=\la 0,j \ra$ and $y=\la 0,j+ 1 \ra $,
then $\sigma(x)\sigma(y)=\la 0,j \ra \la 0,j+1 \ra$.

If $x=\la i,j \ra $, $y=\la i-1,j \pm 1 \ra $, and $1 \leqslant
i \leqslant p-m$, then
$\sigma(x)\sigma(y)=\la 0,j \ra \la 0,j \pm 1 \ra$.

If $x=\la i,j \ra $, $y=\la i-1,j \pm 1 \ra $, and $p-m+1 \leqslant i
\leqslant p$, then $\sigma(x)\sigma(y)=\la
i-p+m,j \ra \la i-p+m-1,j \pm 1 \ra$.

If $x=\la p,j \ra$ and $y= u$,
then $\sigma(x)\sigma(y)=\la m,j \ra u$.

We see that $\sigma(x)\sigma(y) \in E({\sf M}_m(C_{2q+1}))$ in all cases.
Hence, we have ${\sf M}_p(C_{2q+1}) \rightarrow {\sf M}_m(C_{2q+1})$.

\medskip

{\bf Stage 2}.\
Define a mapping $\tau: V({\sf M}_m(C_{2q+1})) \rightarrow
V({\sf M}_m(C_{2s+1}))$ as follows. Let $\tau(u)=u$. When
$0 \leqslant i \leqslant m$, let
$$
\tau(\la i, j \ra )=  \left \{
\begin{array}{ll}
\la i, j \ra    & \mbox{ if $0 \leqslant j \leqslant 2s$}, \\
\la i, 2s \ra   & \mbox{ if $2s+2 \leqslant j \leqslant 2q$
 and $j$ is even}, \\
\la i, 2s-1 \ra & \mbox{ if $2s+1 \leqslant j \leqslant 2q-1$
and $j$ is odd}. \\
\end{array}
\right.
$$

To check that $\tau$ is a homomorphism from ${\sf M}_m(C_{2q+1})$
to ${\sf M}_m(C_{2s+1})$, let $xy$ be an edge of ${\sf
M}_m(C_{2q+1})$ having the form $x=\la i,j \ra$ and $y=\la k,j+1
\ra$, where $i,k \in \{0, 1, \ldots , m\}$, $k = i \pm 1$ if $i >
0$, and $k = 0$ if $i = 0$. $$
\tau(x)\tau(y)= \left \{
\begin{array}{ll}
\la i,j\ra \la k,j+1 \ra  &  \mbox{ if $0 \leqslant j \leqslant 2s-1$}, \\
\la i,2s\ra \la k,2s-1 \ra    &  \mbox{ if $2s \leqslant j \leqslant 2q-2$
and $j$ is even}, \\
\la i,2s-1 \ra \la k,2s \ra   &   \mbox{ if $2s+1 \leqslant j \leqslant 2q-1$
and $j$ is odd}, \\
\la i,2s\ra \la k,0 \ra    &  \mbox{ if $j=2q$}.\\
\end{array}
\right.
$$

Next if $x=\la m,j \ra$, $y=u$, and $0 \leqslant j \leqslant 2q$,
then $\tau(x)\tau(y)=\la m,h \ra u$ for an appropriate
$h \in \{0, 1, \dots , 2s \}$.

We see that $\tau(x)\tau(y) \in E({\sf M}_m(C_{2s+1}))$ in all cases.
Hence, we have ${\sf M}_m(C_{2q+1}) \rightarrow {\sf M}_m(C_{2s+1})$.
\end{proof}

\begin{theorem}\label{product}
If $m, n, s, t$ are all positive integers, then the direct product
${\sf M}_m(C_{2s+1})\times {\sf M}_n(C_{2t+1})$ is not a cover graph.
\end{theorem}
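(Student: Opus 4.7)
The plan is to reduce the general case to the diagonal case $G \times G$ so that Corollary \ref{G*G} can be applied. Every ${\sf M}_m(C_{2s+1})$ and every ${\sf M}_n(C_{2t+1})$ is itself not a cover graph (Theorem \ref{iffeven}), and Corollary \ref{G*G} tells us that the square of any non-cover graph is not a cover graph. So if I can pull a homomorphism from such a square \emph{into} the given direct product, the contrapositive of Lemma \ref{2homoto1} will finish the argument.

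Concretely, let $p=\max\{m,n\}$ and $q=\max\{s,t\}$. Because $p\geqslant m$, $p\geqslant n$, $q\geqslant s$, and $q\geqslant t$, Lemma \ref{big-homo-to-small} furnishes two homomorphisms
$$
{\sf M}_p(C_{2q+1}) \to {\sf M}_m(C_{2s+1}),\qquad
{\sf M}_p(C_{2q+1}) \to {\sf M}_n(C_{2t+1}).
$$
Lemma \ref{homo-to-subgraph} then combines them into a single homomorphism
$$
{\sf M}_p(C_{2q+1}) \times {\sf M}_p(C_{2q+1})
\;\longrightarrow\;
{\sf M}_m(C_{2s+1}) \times {\sf M}_n(C_{2t+1}).
$$

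Since $2q+1$ is odd, Theorem \ref{iffeven} says that ${\sf M}_p(C_{2q+1})$ is not a cover graph, so by Corollary \ref{G*G} neither is ${\sf M}_p(C_{2q+1})\times {\sf M}_p(C_{2q+1})$. Applying the contrapositive of Lemma \ref{2homoto1} to the homomorphism displayed above yields that ${\sf M}_m(C_{2s+1}) \times {\sf M}_n(C_{2t+1})$ is not a cover graph, as required.

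There is essentially no technical obstacle here: all of the heavy lifting has already been done in Lemma \ref{big-homo-to-small} (the explicit two-stage construction $\sigma\circ\tau$) and in Corollary \ref{G*G} (the diagonal embedding $v\mapsto(v,v)$). The only conceptual step is recognizing that taking $p$ and $q$ to be the coordinate-wise maxima lets one absorb both factors into a common square, converting what looks like a genuinely two-parameter problem into the previously-handled diagonal case.
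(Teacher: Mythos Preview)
Your proof is correct and is essentially identical to the paper's own argument: the paper also sets $p=\max\{m,n\}$, $q=\max\{s,t\}$, invokes Lemmas \ref{big-homo-to-small} and \ref{homo-to-subgraph} to obtain the homomorphism from the square into the product, and then finishes with Corollary \ref{G*G} and Lemma \ref{2homoto1}. The only cosmetic difference is that the paper cites Theorem \ref{lai} rather than Theorem \ref{iffeven} to conclude that ${\sf M}_p(C_{2q+1})$ is not a cover graph, which amounts to the same thing here.
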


\begin{proof}
Assume $p=\max\{m,n\}$ and $q=\max\{s, t\}$. By Lemma
\ref{homo-to-subgraph} and Lemma \ref{big-homo-to-small}, we have
${\sf M}_p(C_{2q+1})\times {\sf M}_p(C_{2q+1}) \rightarrow
{\sf M}_m(C_{2s+1})\times {\sf M}_n(C_{2t+1})$. Then Lemma \ref{2homoto1},
Corollary \ref{G*G}, and Theorem \ref{lai} finish the proof.
\end{proof}

\begin{theorem}\label{product-of-nonbipartite}
The direct product ${\sf M}_m(G)\times
{\sf M}_n(H)$ is a cover graph if and only if $G$
or $H$ is bipartite.
\end{theorem}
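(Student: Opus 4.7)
The statement is essentially a straightforward consequence of the machinery already built up, so my plan is to dispatch the two directions cleanly using the preceding lemmas.

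For the "if" direction, I would assume without loss of generality that $G$ is bipartite. Theorem \ref{lai} then says ${\sf M}_m(G)$ is itself a cover graph, and Theorem \ref{dir} immediately gives that the direct product ${\sf M}_m(G) \times {\sf M}_n(H)$ is a cover graph. This direction requires no additional work.

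For the "only if" direction, I would argue by contrapositive: assume neither $G$ nor $H$ is bipartite. Then $G$ contains an odd cycle $C_{2s+1}$ for some $s \geqslant 1$ and $H$ contains an odd cycle $C_{2t+1}$ for some $t \geqslant 1$. The paper already observes that ${\sf M}_m$ is monotone with respect to taking subgraphs, so ${\sf M}_m(C_{2s+1})$ is a subgraph of ${\sf M}_m(G)$ and ${\sf M}_n(C_{2t+1})$ is a subgraph of ${\sf M}_n(H)$. Applying Lemma \ref{product-of-subgraph}, the product ${\sf M}_m(C_{2s+1}) \times {\sf M}_n(C_{2t+1})$ embeds as a subgraph of ${\sf M}_m(G) \times {\sf M}_n(H)$.

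By Theorem \ref{product}, ${\sf M}_m(C_{2s+1}) \times {\sf M}_n(C_{2t+1})$ is not a cover graph. Since the inclusion of a subgraph is a homomorphism, the contrapositive of Lemma \ref{2homoto1} tells us that any supergraph of a non-cover graph is a non-cover graph; hence ${\sf M}_m(G) \times {\sf M}_n(H)$ is not a cover graph, completing the contrapositive. There is no real obstacle here: the work was all absorbed into Theorem \ref{product} (via Corollary \ref{G*G} and Lemma \ref{big-homo-to-small}), and the present theorem is just the combination of subgraph monotonicity with that result.
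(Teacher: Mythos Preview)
Your proof is correct and follows essentially the same route as the paper: the ``if'' direction via Theorem~\ref{lai} plus the projection homomorphism (the paper cites Lemma~\ref{2homoto1} directly where you cite its packaged form Theorem~\ref{dir}), and the contrapositive ``only if'' direction via odd-cycle subgraphs, Lemma~\ref{product-of-subgraph}, and Theorem~\ref{product}. The arguments match step for step.
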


\begin{proof}
Without loss of generality, we may suppose that $G$ is bipartite.
Then ${\sf M}_m(G)$ is a cover graph by
Theorem \ref{lai}. Since ${\sf M}_m(G)\times {\sf M}_n(H)
\rightarrow {\sf M}_m(G)$, ${\sf M}_m(G)\times {\sf M}_n(H)$
is a cover graph by Lemma \ref{2homoto1}.

Conversely, if both $G$ and $H$ are not bipartite, then we
may assume that $C_{2s+1}$ is a subgraph of $G$ and $C_{2t+1}$
is a subgraph of $H$ for some positive integers
$s$ and $t$. Hence ${\sf M}_m(C_{2s+1})$ is a subgraph of
${\sf M}_m(G)$ and ${\sf M}_n(C_{2t+1})$ is a subgraph of
${\sf M}_n(H)$. Then ${\sf M}_m(C_{2s+1})\times {\sf M}_n(C_{2t+1})$
is a subgraph of ${\sf M}_m(G)\times {\sf M}_n(H)$ by Lemma
\ref{product-of-subgraph}. Since ${\sf M}_m(C_{2s+1})\times
{\sf M}_n(C_{2t+1})$ is not a cover graph, neither is ${\sf M}_m(G)
\times {\sf M}_n(H)$ a cover graph.
\end{proof}

\bigskip

%
\section{A quick application}
%

To conclude this paper, we give a quick application
of our main results.
In recent years, there has been an intensive investigation
into the circular chromatic number $\chi_c(G)$ of a graph $G$.
Since $\chi(G)-1 < \chi_c(G) \leqslant \chi(G)$,
the circular chromatic number is regarded as a refinement of the
ordinary chromatic number. Zhu \cite{zhu} and \cite{zhu2}
provide comprehensive surveys on the circular chromatic
number. The reader is referred to them for basic notions
and results. Among a number of approaches to define
the circular chromatic number, one is via the minimum
imbalance of acyclic orientations.

Let $D$ be an acyclic orientation of the graph $G$. For an
undirected cycle $C$ of $G$, we choose one of the two
traversals of $C$ as the positive direction. An arc is
said to be {\em forward} if its orientation under $D$ is
along the positive direction of $C$, otherwise it is said
to be {\em backward}. We use $(C,D)^+$ (or $(C,D)^-$) to
denote the set of all forward (or backward) arcs of $C$
with respect to $D$. The {\em imbalance} Imb$(D)$ of $D$ is
defined to be
$$
\max \left\{ \max \left\{\frac{|C|}{|(C,D)^+|}, \frac{|C|}{|(C,D)^-|}\right\}
 \  \bigg{|} \  \mbox{ $C$ is a cycle of $G$} \right\}.
$$
By convention, Imb$(D)=2$ if $G$ has no cycles.
A result of Goddyn et al. \cite{gtz} implies that
$$
\chi_c(G)=\min \{ \mbox{Imb}(D) \mid \mbox{$D$ is an acyclic
orientation of $G$}\}.
$$

We can generalize Theorem \ref{chig} in the context of
circular chromatic number as follows.

\begin{theorem}\label{xc}
A graph $G$ is a cover graph if $\chi_c(G)< g(G)$.
\end{theorem}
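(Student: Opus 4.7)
The plan is to invoke Pretzel's characterization (a graph is a cover graph iff it admits an acyclic orientation with no dependent arcs) together with the Goddyn--Tarsi--Zhang representation of $\chi_c(G)$ recalled just above. First I would pick an acyclic orientation $D$ of $G$ attaining the minimum imbalance, so that $\mbox{Imb}(D) = \chi_c(G) < g(G)$, and then argue that this particular $D$ has no dependent arc. Pretzel's theorem then immediately yields that $G$ is a cover graph.

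To rule out dependent arcs, I would proceed by contradiction. Suppose some arc $x \to y$ of $D$ is dependent. By definition, reversing it creates a directed cycle in $D$, which is equivalent to saying that $D$ already contains a directed path $P$ from $x$ to $y$ that avoids the arc $xy$. Concatenating $P$ with the arc $xy$ then produces an undirected cycle $C$ of $G$ with $|C| \geqslant g(G)$.

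Next I would orient $C$ positively so that its traversal follows $P$ from $x$ to $y$ and then returns from $y$ to $x$ along the arc $xy$. In this traversal every arc of $P$ is forward while the single arc $xy$ is backward, so $|(C,D)^-| = 1$ and hence
\[
\mbox{Imb}(D) \;\geqslant\; \frac{|C|}{|(C,D)^-|} \;=\; |C| \;\geqslant\; g(G) \;>\; \chi_c(G) \;=\; \mbox{Imb}(D),
\]
which is the desired contradiction.

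The only conceptual step is recognizing that a dependent arc $xy$, together with the directed path it forces in $D$, produces a cycle whose backward-arc count is exactly $1$ and whose imbalance ratio therefore equals the cycle's own length. No serious obstacle arises; once an imbalance-minimizing orientation is in hand, the two hypotheses $\chi_c(G) = \mbox{Imb}(D)$ and $g(G) > \chi_c(G)$ close the argument in a single chain of inequalities.
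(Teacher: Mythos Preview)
Your proof is correct and follows essentially the same route as the paper: choose an acyclic orientation $D$ realizing $\chi_c(G)=\mbox{Imb}(D)$ via the Goddyn--Tarsi--Zhang formula, then show that a dependent arc would yield a cycle with exactly one backward arc and hence imbalance at least $g(G)$, contradicting $\chi_c(G)<g(G)$; Pretzel's criterion then finishes. Your write-up is slightly more explicit about constructing the directed path and identifying the single backward arc, but the argument is the same.
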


\begin{proof}
There exists some acyclic orientation $D_0$ of $G$ with
$\chi_c(G) = \mbox{Imb}(D_0)$ by the result of Goddyn et al.
Suppose that $D_0$ has a dependent arc. Then the reversal of
that dependent arc will create a directed cycle with an
underlying cycle $C_0$ of $G$. Hence $\max\{|C_0|/|(C_0,D_0)^+|$,
$|C_0|/|(C_0,D_0)^-| \}$ $ \geqslant |C_0|$ $ \geqslant g(G)$. It follows that
$\chi_c(G) = \mbox{Imb}(D_0) \geqslant g(G)$ which contradicts our assumption.
Therefore $d_{\min}(G)=0$ and $G$ is a cover graph.
\end{proof}

\bigskip

It follows from Theorems \ref{lai} and \ref{xc} that, $\chi_c({\sf
M}_m(C_{2s+1}))=4$ for $s > 1$ since $\chi({\sf
M}_m(C_{2s+1}))=g({\sf M}_m(C_{2s+1}))=4$. Now let $s, t > 1$,
and let $G={\sf M}_m(C_{2s+1}) \times {\sf M}_n(C_{2t+1})$. Since
the girth of $G$ is 4 and $G$ is not a cover graph, we have
$\chi_c(G) \geqslant 4$ by Theorem \ref{xc}. On the other hand,
$G \rightarrow {\sf M}_m(C_{2s+1})$ implies that $\chi_c(G)
\leqslant \chi(G) \leqslant
\chi({\sf M}_m(C_{2s+1}))=4$.

\begin{corollary}
Let $s, t > 1$. Then
$$
\chi_c({\sf M}_m(C_{2s+1}) \times {\sf M}_n(C_{2t+1})) =
\min \{\chi_c({\sf M}_m(C_{2s+1})), \chi_c({\sf M}_n (C_{2t+1}))\}.
$$
\end{corollary}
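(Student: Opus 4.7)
The plan is to verify that both sides of the identity reduce to the single value $4$, so the proof becomes a matter of bookkeeping among results already in place. I would first compute the right-hand side by showing that for any $s>1$, $\chi_c({\sf M}_m(C_{2s+1}))=4$. The odd cycle $C_{2s+1}$ sits inside ${\sf M}_m(C_{2s+1})$ as a non-bipartite subgraph, so Theorem \ref{lai} tells us ${\sf M}_m(C_{2s+1})$ is not a cover graph. Since $2s+1\geqslant 5$ and the generalized Mycielskian construction introduces no triangles in this case, the graph has girth exactly $4$. The contrapositive of Theorem \ref{xc} then gives $\chi_c\geqslant g=4$, while the standard inequality $\chi_c\leqslant \chi = 4$ yields the matching upper bound. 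The same reasoning applied to the other factor gives $\chi_c({\sf M}_n(C_{2t+1}))=4$, so the right side equals $\min\{4,4\}=4$.

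Next I would bound the left side. Let $G={\sf M}_m(C_{2s+1})\times {\sf M}_n(C_{2t+1})$. I would first check that $g(G)=4$: both factors are triangle-free (each has girth $4$), so a triangle in $G$ would project to triangles in both factors, which is impossible; and combining an edge in one factor with a $4$-cycle in the other produces a $4$-cycle in $G$. Combining $g(G)=4$ with Theorem \ref{product}, which asserts that $G$ is not a cover graph, the contrapositive of Theorem \ref{xc} delivers $\chi_c(G)\geqslant 4$. For the matching upper bound, the projection $G\to {\sf M}_m(C_{2s+1})$ is a homomorphism, and homomorphisms do not increase the circular chromatic number, so $\chi_c(G)\leqslant \chi_c({\sf M}_m(C_{2s+1}))=4$. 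Hence $\chi_c(G)=4$, matching the right-hand side.

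The only non-cosmetic step is the girth verification: confirming that neither generalized Mycielskian in question has a cycle shorter than $4$, and that the direct product inherits girth $4$ rather than something smaller. This is a routine combinatorial check — Mycielskian-type constructions preserve triangle-freeness, and triangle-freeness is preserved under direct products — so I do not anticipate it being a genuine obstacle. Once it is dispensed with, the corollary follows by assembling Theorems \ref{lai}, \ref{product}, and \ref{xc} exactly as sketched in the paragraph preceding the statement.
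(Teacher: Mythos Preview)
Your proposal is correct and follows essentially the same route as the paper: compute $\chi_c$ of each factor as $4$ via Theorems \ref{lai} and \ref{xc}, then squeeze $\chi_c$ of the product between $4$ (from Theorem \ref{product} plus Theorem \ref{xc} and $g(G)=4$) and $4$ (from the projection homomorphism). You supply more detail on the girth verification than the paper does, but the argument is otherwise identical.
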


Note that the above Corollary also follows directly from the
main argument of the proof of Theorem \ref{product}. It is also an immediate
consequence of the following
much stronger result established in Tardif
\cite{tardif}: $\chi_c(G\times H) =$
$\min\{\chi_c(G),$
$\chi_c(H)\}$ if $\min\{\chi_c(G),$
$\chi_c(H)\} \leqslant 4$.

\bigskip

{\bf Acknowledgement.}\  the authors thank the referees for helping them to
improve the presentation of this paper.



\begin{thebibliography}{99}

\bibitem{ag} M. Aigner, G. Prins,
$k$-orientable graphs,
Mathematics Department, Free University Berlin,
preprint, 1980.

\bibitem{bbn} B. Bollob\'{a}s, G. Brightwell, J. Ne\v{s}et\v{r}il,
Random graphs and covering graphs of posets,
Order 3 (1986), 245-257.

\bibitem{birightwell} G. Brightwell,
On the complexity of diagram testing,
Order 10 (1993), 297-303.

\bibitem{gtz} L. A. Goddyn, M. Tarsi, C. Q. Zhang,
On $(k,d)$-colorings and fractional nowhere-zero flows,
J. Graph Theory 28 (1998), 155-161.

\bibitem{fisher} D. C. Fisher, K. Fraughnaugh,
L. Langley, D. B. West,
The number of dependent arcs in an acyclic orientation,
J. Combin. Theory Ser. B 71 (1997), 73-78.

\bibitem{het} S. T. Hedetniemi,
Homomorphisms of graphs and automata,
Technical Report 03105-44-T,
University of Michigan, 1966.

\bibitem{ik} W. Imrich, S. Klav\v{z}ar,
{\em Product Graphs: Structure and Recognition},
Wiley-Interscience, New York, 2000.

\bibitem{tower} K.-W. Lih, C.-Y. Lin, L.-D. Tong,
Non-cover generalized Mycielski, Kneser, and Schrijver graphs,
Discrete Math. 308 (2008), 4653-4659.

\bibitem{rodl-2} J. Ne\v{s}et\v{r}il, V. R\"{o}dl,
More on complexity of the diagrams,
Comment. Math. Univ. Carolinae 2 (1995), 269-278.

\bibitem{ore}  O. Ore,
{\em Theory of Graphs},
AMS Colloquium Publications, 38,
Amer. Math. Soc., Providence, R.I., 1962.

\bibitem{pret} O. Pretzel,
On graphs that can be oriented as diagrams of ordered sets,
Order 2 (1985), 25-40.

\bibitem{balanced} O. Pretzel, D. Youngs,
Balanced graphs and noncovering graphs,
Discrete Math. 88 (1991), 279-287.

\bibitem{tardif} C. Tardif,
Multiplicative graphs and semi-lattice endomorphisms
in the category of graphs,
J. Combin. Theory Ser. B 95 (2005), 338-345.

\bibitem{zhu} X. Zhu,
Circular chromatic number: a survey,
Discrete Math. 229 (2001), 371-410.

\bibitem{zhu2} X. Zhu,
Recent developments in circular colouring of graphs,
in M. Klazar, J. Kratochv\'{i}l, M. Loebl, J. Matou\v sek,
R. Thomas, P. Valtr eds., {\em Topics in Discrete Mathematics},
Algorithms Combin., 26, Springer, Berlin, 2006, pp. 497-550.

\end{thebibliography}
\end{document}